\newtheorem{theorem}{Theorem}[section]
\newtheorem*{maintheorem}{Main Theorem}
\newtheorem{proposition}[theorem]{Proposition}
\newtheorem{lemma}[theorem]{Lemma}
\newtheorem{definition}[theorem]{Definition}
\newtheorem{remark}[theorem]{Remark}
\newcommand{\T}{\mathbb{T}}
\newcommand{\Z}{\mathbb{Z}}
\newcommand{\N}{\mathbb{N}}
\newcommand{\R}{\mathbb{R}}
\newcommand{\C}{\mathscr{C}}
\newcommand{\To}{\longrightarrow}
\newcommand{\Fs}{\mathcal{F}^{ss}}
\newcommand{\Fu}{\mathcal{F}^{uu}}
\newcommand{\Fc}{\mathcal{F}^c}
\newcommand{\F}{\mathcal{F}}
\newcommand{\diff}{\ensuremath{\mathit{diff}(M)}}
\newcommand{\PH}{\ensuremath{\mathit{PH}(M)}}
\newcommand{\information}{{
  \bigskip
  \footnotesize
	
	\textbf{Pablo D. Carrasco: }\textsc{ICEx-UFMG, Avda. Presidente Ant\^onio Carlos 6627, Belo Horizonte-MG,BR 31270-901} \par\nopagebreak
	\textit{E-mail:} \texttt{pdcarrasco@mat.ufmg.br}

  \medskip
  \textbf{Davi Obata: }\textsc{CNRS-Laboratoire de Math\'ematiques d'Orsay, UMR 8628, Universit\'e Paris-Sud 11, Orsay Cedex 91405, France } \par\nopagebreak
  \textsc{Instituto de Matem\'atica, Universidade Federal do Rio de Janeiro, P.O. Box 68530, 21945-970, Rio de Janeiro Brazil}\par\nopagebreak
  \textit{E-mail:} \texttt{davi.obata@gmail.com}
}}
\begin{document}
\title{A new example of robustly transitive diffeomorphism}

\author{Pablo D. Carrasco\\
\and
Davi Obata\footnote{D.O. was supported by the ERC project 692925 NUHGD.}}

\maketitle

\begin{abstract}
 We present an example of a $\mathcal{C}^1$-robustly transitive skew-product with non-trivial, non-hyperbolic action on homology.
 The example is conservative, ergodic, non-uniformly hyperbolic and its fiber directions cannot be decomposed into two dominated expanded/contracted bundles.
\end{abstract}

\section{Introduction and Main Theorem}

Let $M$ be a closed Riemannian manifold and denote by \diff\ the space of $\mathcal{C}^1$ diffeomorphisms in $M$, equipped with the $\mathcal{C}^1$ topology. In trying to understand \diff, properties that are stable under perturbations play a central role in the study. This is true not only from a theoretical point of view (i.e.\@ understanding open sets in \diff), but also from an applied one, as it is desirable to maintain the same qualitative conclusions even in presence of small errors. Following common use, we will say that $f\in \diff$ has the property $\mathcal{P}$ \emph{robustly} if $\mathcal{P}$ is also valid in an open set $\mathcal{U}_f\subset \diff$ containing $f$.  

Among the robust properties that have been studied, transitivity has been one of the most extensively researched. Recall that a diffeomorphism $f$ is \textit{transitive} if for any two non-empty open sets $U$ and $V$, there is an integer $n\in \N$ such that $f^n(U) \cap V \neq \emptyset$. The first known examples of robustly transitive diffeomorphisms are given by Anosov maps \cite{anosov67, SmaleBull}: if $f\in \diff$ is transitive and  uniformly hyperbolic, then it is robustly transitive. It turns out that certain degree of hyperbolicity is required in order to have robust transitivity. Indeed, if $f\in \diff$ is robustly transitive and $dim M\leq 3$ then $f$ is hyperbolic/partially hyperbolic \cite{Mane1978,Diaz1999}; in general, $f$ admits  $Df$-invariant bundles $E,F$ such that $det(Df^{-n_0}|E),det(Df^{n_0}|D)\leq 1/2$ for some uniform $n_0\geq 1$ \cite{Bonatti2003}. It is worth to point out that the bundles $E,F$ above are not necessarily uniformly expanding \cite{Bonatti2000}.

As for non-hyperbolic examples, there are several known. The list below gives a rough (and necessarily, incomplete) picture of the arguments used to establish  robust transitivity for non hyperbolic systems.

\begin{itemize}
	\item[-] {\it Deformations from Anosov systems}. The first concrete example of non-uniformly hyperbolic robustly transitive map was given by M. Shub in \cite{Shub71};  later in \cite{Mane1978} R. Ma\~ne gave a similar type of construction on $\T^3$. They are both partially hyperbolic (see next section) and homotopic to an Anosov system, in particular with hyperbolic action on homology. The example given in \cite{Bonatti2000} is also a deformation of an Anosov diffeomorphism, and although it is not partially hyperbolic, it does admit a dominated splitting\footnote{An $f$ invariant closed set $\Lambda$ admits an dominated splitting if $T_{\Lambda}M=E\oplus F$ is an $Df$-invariant decomposition satisfying $\|Df^{n_0}(m)|E(m)\|\cdot \|Df^{-n_0}|F(f^{n_0}m)\|\leq \frac{1}{2}$ for some uniform $n_0\geq 1$.} coherent with its Anosov part (as the previous two examples). More recently, R. Potrie (\cite{PotrieThesis2012} page 152) gave an example of this type, but with the difference that it admits a dominated splitting which is not coherent with its hyperbolic part.  In these cases, the proof of robust transitivity is founded in that they have hyperbolic-type behavior in a large part of the space. 
	
	\item[-] {\it Blenders}. This powerful mechanism was introduced in \cite{Bonatti1996} by C. Bonatti and L. D\'iaz. With it the authors were able to prove that some perturbations of time-$t$ maps of mixing hyperbolic flows, and of the product of an Anosov map times the identity (say, on $\T^3$), are robustly transitive. Note that in the first case the examples are homotopic to the identity, while in the second the action on homology on the fiber direction is trivial. The same tool was used by C. Cheng, S. Gan and Y. Shi in \cite{ChengGanShi2018} to present a robustly transitive skew-product, but where the fiber action in homology is given by minus the identity. We also point out that the example in \cite{ChengGanShi2018} has some interesting ergodic properties.

	\item[-] {\it Minimality of the stable/unstable foliation}. It is easy to see that if $f\in \diff$ admits an invariant expanding minimal foliation, then $f$ is transitive. Conditions that guarantee the persistence of these types of foliations are thus relevant for robust transitivity. Among this, the property SH introduced by E. Pujals and M. Sambarino \cite{PujalsSambarino2006} is particularly simple to check, and can be applied to establish robust transitivity of transitive partially hyperbolic systems where one has some control on the behavior of the stable/unstable foliations. Shub and Ma\~ne's examples cited before fall into this category.   
	
	\item[-] {\it Non-uniform expansion along the center}. In a recent work \cite{Yang2018}, J. Yang considers partially hyperbolic systems with non-uniformly expanding center behavior, and shows that any conservative ergodic of such systems with one-dimensional center is robustly transitive. The author uses the non-uniform expanding character of the center as a replacement for hyperbolicity, employing methods of smooth ergodic theory. These techniques however seem to be applicable only for systems with one-dimensional center.  
\end{itemize}

In this note we add a different type of example to the previous list. We present a diffeomorphism that is again a partially hyperbolic skew-product on $\T^4$, but with non-hyperbolic action on homology. More importantly, the tangent bundle of the fiber neither admit any one-dimensional invariant direction, nor does it have a non-uniform expanding/contracting behavior.


Let $\T^2 = \R^2/2\pi\Z^2$ and for each $N>0$ we consider the \textit{standard map} given by $s_N(x,y) = (2N\sin x + 2x - y, x)$. Fix $A\in \mathrm{SL}(2, \Z)$ a hyperbolic matrix. On $\T^4$ we use coordinates $(x,y,z,w)$, and for each $N$ we consider the skew product $f_N: \T^2\times \T^2\To \T^2 \times \T^2$ given by\footnote{Here $[N]$ denotes the integer part of $N$.}
\[
f_N(x,y,z,w) =(s_N(x,y) + P_x \circ A^{[N]} (z,w), A^{[2N]}(z,w))\textrm{, where } P_x(x,y)=(x,0).
\]
This diffeomorphism was introduced in \cite{bergercarrasco2014} where it is proven that for large $N$ it is non-uniformly hyperbolic (i.e.\@ all its Lyapunov exponents are Lebesgue almost everywhere different from zero), and remains so by $\mathcal{C}^2$ conservative perturbations: these maps are in fact ergodic with respect to the Lebesgue measure \cite{obataerg}. It is direct to verify that the action on homology of $f_N$ is not hyperbolic, and that its fiber direction does not admit a dominated splitting (since $Df_N|\R^2\times\{0\}=Ds_N$). We remark that for a system to have the SH property, the behavior along the center has to be somehow ``homogeneous'', meaning, one has to find many points where the action on the center is expanding (or contracting) for some uniform time. The mixed behavior along the center for $f_N$ implies that it does not satisfy the SH property, and the question whether its stable/unstable foliations are (robustly) minimal seems to be outside the reach of current technology. 

Here we establish the following.

\begin{maintheorem}
There exists $N_0\in \N$ such that for any $N\geq N_0$ the diffeomorphism $f_N$ is robustly transitive (in fact, robustly topologically mixing).
\end{maintheorem}

\begin{remark}
	Topologically mixing is a stronger property than transitivity: $f$ is topologically mixing if for any two open sets $U$ and $V$, there exists $n_0\in \N$ such that for any $n\geq n_0$ we have $f^n(U) \cap V \neq \emptyset$.
\end{remark}

The proofs of robust transitivity for the diffeomorphisms which are deformations of Anosov systems, mentioned above, use information about some type of minimality (or $\varepsilon$-minimality) of stable/unstable manifolds. Observe that, our example has a hyperbolic-type behavior in a large part of the manifold, as in the examples which are deformations of Anosov systems. However, an important difference in our proof is that we do not use any information on the minimality (or $\varepsilon$-minimality) of stable/unstable foliations. Finally, we remark that in our example (and for sufficiently small $\mathcal{C}^2$ volume preserving perturbations) the manifold is the homoclinic class of an hyperbolic point, due to ergodicity and Katok's theorem \cite{Katok}. However, it remains unknown whether this is true also for $\mathcal{C}^1$ small perturbations, and we pose the question for future research.

\subsection*{Acknowledgements}
The authors thank Sylvain Crovisier for useful comments, and to the referee for her/his suggestions and pointing out several typos.



\section{Preliminaries}
\label{section.preliminaries}
In this section we present the tools we will use. We first state some general facts about partially hyperbolic diffeormorphisms and then some facts about the example we are studying.

\subsection{Partial hyperbolicity and foliations}

A diffeomorphism $f\in \diff$ is \textit{partially hyperbolic} if there exist a $Df$-invariant decomposition $TM = E^{ss}_f \oplus E^c_f \oplus E^{uu}_f $ and a Riemannian metric on $M$ such that for any $m\in M$
\begin{align*}
&\|Df(m)|_{E^{ss}_f(m)}\|<1<\|(Df(m)|_{E^{uu}_f(m)})^{-1}\|^{-1}\\
\|Df(m)|_{E^{ss}_f(m)}\|<& \|(Df(m)|_{E^{c}_f(m)})^{-1}\|^{-1} \leq  \|Df(m)|_{E^c_f(m)}\| < \|(Df(m)|_{E^{uu}_f(m)})^{-1}\|^{-1}.	
\end{align*}
The set \PH\ of partially hyperbolic diffeomorphisms is an open subset of \diff. It is well known that the distributions $E^{ss}_f$ and $E^{uu}_f$ are uniquely integrable \cite{hps}, that is, there are two unique foliations $\mathcal{F}^{ss}_f$ and $\mathcal{F}^{uu}_f$, with $C^1$-leaves, that are tangent to $E^{ss}_f$ and $E^{uu}_f$ respectively. For a point $m\in M$ we will denote by $W^{ss}_f(m)$ a leaf of the foliation $\mathcal{F}^{ss}$, we will call such leaf the strong stable manifold of $m$. Similarly, we define the strong unstable manifold of $m$ and denote it by $W^{uu}_f(m)$. We denote $E^{cs}_f = E^s_f \oplus E^c_f$ and $E^{cu}_f = E^c_f \oplus E^u_f $.


\begin{definition}
\label{dynamicalcoherence}
A partially hyperbolic diffeomorphism $f$ is \textit{dynamically coherent} if there are two invariant foliations $\mathcal{F}^{cs}_f$ and $\mathcal{F}^{cu}_f$, with $C^1$-leaves, tangent to $E^{cs}_f$ and $E^{cu}_f$ respectively. From those two foliations one obtains another invariant foliation $\mathcal{F}^c_f = \mathcal{F}^{cs}_f \cap \mathcal{F}^{cu}_f$ with $\mathcal{C}^1$ leaves that is tangent to $E^c_f$. We call these foliations the center-stable, center-unstable and center foliation.

\end{definition}

For $R>0$ we denote by $W^*_f(m;R)$ the disc of size $R$ centered on $m$, measured by the intrinsic metric in $W^*_f(m)$, for $*=ss,c,uu$. 

%

\begin{definition}
Let $f,g\in \PH$ dynamically coherent. We say that $f$ and $g$ are \textit{leaf conjugated} if there is a homeomorphism (called a leaf conjugacy) $h:M\to M$ that sends leaves of $\Fc_f$ to leaves of $\Fc_g$ and such that for any $L \in \mathcal{F}^c_f$ it is verified
\[
h(f(L)) = g(h(L)).
\]
\end{definition}

One may study the stability of partially hyperbolic systems up to leaf conjugacies. The next theorem is a good representative of this situation.



\begin{theorem}[\cite{hps}, Theorem $7.4$]
\label{leafconjugacy}
Consider $f\in \PH$ having a differentiable\footnote{More generally, the differentiability condition can be replaced by plaque expansivity. See Chapter 7 of \cite{hps}} center foliation. Then there exists an open neighborhood $U_f\subset \diff$ of $f$ such that any $g\in U_f$ is partially hyperbolic,  dynamically coherent, and leaf conjugate to $f$. The corresponding leaf conjugacy between $g$ and $f$ depends continuously on $g$.
\end{theorem}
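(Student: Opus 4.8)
The plan is to realize $\mathcal{F}^c_f$ as a normally hyperbolic foliation and to invoke the persistence theory for such objects, following the graph-transform / invariant-section scheme of Hirsch--Pugh--Shub. The partial hyperbolicity inequalities say precisely that along the leaves of $\mathcal{F}^c_f$ the normal bundle $E^{ss}_f \oplus E^{uu}_f$ is hyperbolic and dominates the tangential (center) action of $Df$; this is the normal hyperbolicity hypothesis. Differentiability of the center foliation supplies the second ingredient, \emph{plaque expansivity}: nearby orbits that shadow each other through the center plaques cannot separate transversally, so they must lie on a common center leaf. I would isolate these two structural facts first, since everything downstream rests on them.

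First I would dispose of the open part. Since \PH\ is open in \diff, every $g$ in a small $\mathcal{C}^1$-neighborhood $U_f$ of $f$ is again partially hyperbolic, with a splitting $TM = E^{ss}_g \oplus E^c_g \oplus E^{uu}_g$ produced by the standard invariant cone-field argument; the bundles $E^{*}_g$ depend continuously on $g$ in the $\mathcal{C}^0$ topology, and $\mathcal{F}^{ss}_g, \mathcal{F}^{uu}_g$ exist by unique integrability. What is not automatic is the integrability of $E^{cs}_g$ and $E^{cu}_g$, which is exactly the dynamical coherence of $g$, and the relation of $\mathcal{F}^c_g$ to $\mathcal{F}^c_f$; these are the substantive conclusions.

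Next comes the heart of the argument, the persistence of the center foliation. I would pass to the standard tubular lift of the dynamics to the space of center plaques: over each leaf $L\in \mathcal{F}^c_f$ one regards candidate $g$-invariant center plaques as graphs of sections of the normal bundle, and the $g$-action induces a fibered map on this space of sections covering the dynamics on the leaf space. Because the normal contraction and expansion rates beat the tangential rate (normal hyperbolicity), this fibered map is a fiber contraction in the sense of the $\mathcal{C}^0$ invariant-section theorem, so it has a unique invariant continuous section. The graphs of that section assemble into the $g$-invariant foliation $\mathcal{F}^c_g$, and running the same construction with the center-stable and center-unstable directions yields $\mathcal{F}^{cs}_g$ and $\mathcal{F}^{cu}_g$ (the stable and unstable foliations of the normally hyperbolic center foliation), whose intersection recovers $\mathcal{F}^c_g$; this gives the dynamical coherence of $g$. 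The leaf conjugacy $h$ is then the map sending each leaf of $\mathcal{F}^c_f$ to the corresponding leaf of $\mathcal{F}^c_g$ cut out by the section, and the intertwining $h(f(L))=g(h(L))$ holds by construction because the invariant section was built to be equivariant. Continuity of $h$ in $g$ is inherited from the continuous dependence on the parameter $g$ of the fixed point of a uniform contraction.

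The hard part, and the place where differentiability of $\mathcal{F}^c_f$ is genuinely used, is showing that $h$ is a well-defined homeomorphism on the leaf space: that distinct center leaves of $f$ are not collapsed and that the correspondence is continuous in both directions. The fiber-contraction argument alone yields only a persistent invariant lamination; upgrading it to an honest leaf conjugacy requires plaque expansivity, which forces the matching between $f$-plaques and $g$-plaques to be unique and to respect the leaf relation. I would therefore expect the bulk of the work to be verifying plaque expansivity from differentiability via a shadowing-type estimate (a pseudo-orbit staying in a thin center-plaque tube for all time is tracked by a genuine center leaf), and then extracting injectivity and bicontinuity of $h$ from it. Uniform hyperbolicity of the normal bundle together with plaque expansivity is exactly what makes the tracking orbit unique, and hence $h$ a bijection on leaves.
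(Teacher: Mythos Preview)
The paper does not prove this statement at all: Theorem~\ref{leafconjugacy} is quoted verbatim from \cite{hps}, Theorem~7.4, and is used as a black box (see Remark~\ref{continuitycoherent} and Proposition~\ref{coherence}). There is therefore no ``paper's own proof'' to compare against.

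That said, your outline is a fair high-level summary of the Hirsch--Pugh--Shub argument: normal hyperbolicity of $\mathcal{F}^c_f$ plus the invariant-section (graph transform) machinery produces a nearby $g$-invariant lamination, and plaque expansivity---which holds whenever the center foliation is of class $\mathcal{C}^1$---upgrades this to a genuine leaf conjugacy. One point worth sharpening: in the HPS scheme the leaf conjugacy $h$ is not merely a map on the leaf space but an actual homeomorphism $h:M\to M$ carrying leaves to leaves; its construction passes through the canonical coordinate charts adapted to the plaquation, and plaque expansivity is used not only for injectivity on leaves but to ensure that the pseudo-orbit shadowing defining $h$ produces a continuous map on $M$ itself. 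Your sketch conflates these two levels slightly, but the ingredients you list are the right ones.
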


Let $V$ and $S$ be compact manifolds. We define a partially hyperbolic skew-product as a diffeomorphism $f\in PH(V\times S)$ of the form 
\[
f(p,q)=(F_q(p),A(q)) \quad (p,q)\in V\times S,
\]
where $A:S \to S$ is a hyperbolic diffeomorphism, for each $q\in S$, the map $F_q: V \to V$ is a $C^1$-diffeomorphism and depends continuously with the choice of the point $q$, and 
\[
\|A|_{E^{ss}(q)}\| < \|(DF_q(p))^{-1}\|^{-1} \leq \|DF_q(p)\| < \|(A|_{E^{uu}(q)})^{-1}\|^{-1}, \forall (p,q)\in V\times S.
\]
In this case $f$ is dynamically coherent with center foliation $\mathcal{F}^c=\{V\times\{z\}:z\in S\}$. The example $f_N$ that we are considering is of this type.

\begin{remark}\label{continuitycoherent}
Using theorem \ref{leafconjugacy} one checks that if $f$ is a partially hyperbolic skew-product, then any diffeomorphism $g$ sufficiently close to $f$ is also partially hyperbolic, and has a center foliation $\Fc_g$ given by a trivial fibration with leaves diffeomorphic to $V$.  These leaves approach (in the Hausdorff metric) the horizontal foliation $\{V\times\{z\}:z\in S\}$ as $g\To f$. 
\end{remark}

This Remark applies in particular to $f_N$ for sufficiently large $N$. See Proposition \ref{coherence} below.

%
%

\subsection{Some estimates for the example}

Recall that for each $N\geq 0$ and $m=(x,y,z,w) \in \T^4$ we defined the diffeomorphism
\[
f_N(x,y,z,w) =(s_N(x,y) + P_x \circ A^{[N]} (z,w), A^{[2N]}(z,w))\textrm{, where }P_x(x,y)=(x,0).
\] 
Its derivative can be computed in block form
\begin{equation}\label{eq.derivative}
Df_N(m) =  
\begin{pmatrix}
Ds_N(x,y)& P_x \circ A^{[N]}\\
0& A^{[2N]}
\end{pmatrix},	
\end{equation}
where   
\begin{equation}
Ds_N(x,y) =
\begin{pmatrix}
N \cos x +2 & -1\\
1&0
\end{pmatrix}.
\end{equation}
For a point $m=(x,y,z,w)\in \T^4$ we will write $Ds_N(m)=Ds_N(x,y)$. Observe that 
\[
\frac{1}{2N} \leq \|Ds_N\| \leq 2N \textrm{ and } \|D^2s_N\| \leq N.
\]
 
Denote by $0<\lambda <1< \mu=\lambda^{-1}$ the eigenvalues of $A$, and let $e^s,e^u$  be unit eigenvectors of for $\lambda$ and $\mu$, respectively. Consider the involution $\mathcal{I}(x,y,z,w) = (y,x,z,w)$ for $(x,y,z,w) \in \T^2$. An important feature of the map $f_N$ is given by the following lemma.
\begin{lemma}[\cite{bergercarrasco2014}, Lemma $1$]
\label{lemma.involution}
The map $f_N^{-1}$ is conjugated to the map 
\[
(x,y,z,w) \mapsto (s_N(x,y) + P_x \circ A^{-[N]} (z,w), A^{-[2N]}(z,w)),
\]
by the involution $\mathcal{I}$.
\end{lemma}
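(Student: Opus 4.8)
The plan is to prove the identity by a direct computation, the point being that the skew-product shape of $f_N$ makes its inverse completely explicit. Write $s_N(x,y)=(g(x)-y,\,x)$ with $g(x)=2N\sin x+2x$, so that $s_N^{-1}(u,v)=(v,\,g(v)-u)$. Given $(x',y',z',w')=f_N(x,y,z,w)$, the base coordinate gives $(z,w)=A^{-[2N]}(z',w')$; since the twist $P_x\circ A^{[N]}$ alters only the first fiber coordinate, the second fiber coordinate reads off immediately as $x=y'$; and the first fiber coordinate then becomes a single affine equation for $y$. Solving it yields
\[
f_N^{-1}(x',y',z',w')=\Big(s_N^{-1}\big((x',y')-P_x\circ A^{-[N]}(z',w')\big),\ A^{-[2N]}(z',w')\Big),
\]
where I have used the arithmetic identity $A^{[N]}\circ A^{-[2N]}=A^{-[N]}$ (and that $P_x$ projects onto the first coordinate) to rewrite the twist term that appears after the substitution. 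That this is the genuine inverse is confirmed by composing with $f_N$ and invoking $A^{[N]}\circ A^{-[2N]}=A^{-[N]}$ once more.

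The second step is to conjugate by $\mathcal I$. Writing $\mathcal I=\mathcal I_0\times\mathrm{Id}$ with $\mathcal I_0(x,y)=(y,x)$, and noting that $\mathcal I$ is a linear involution of $\T^4$ (so $\mathcal I^{-1}=\mathcal I$), it suffices to evaluate $\mathcal I\circ f_N^{-1}\circ\mathcal I$ in coordinates. Applying $\mathcal I$, then the displayed formula, then $\mathcal I$ again, and using that $P_x$ kills the second component --- so that $(y,x)-P_x\circ A^{-[N]}(z,w)$ has the form $(y-a,\,x)$ with $(a,0)=P_x\circ A^{-[N]}(z,w)$ --- one obtains
\[
\mathcal I\circ f_N^{-1}\circ\mathcal I(x,y,z,w)=\Big(\big(g(x)-y+a,\ x\big),\ A^{-[2N]}(z,w)\Big)=\Big(s_N(x,y)+P_x\circ A^{-[N]}(z,w),\ A^{-[2N]}(z,w)\Big),
\]
which is exactly the asserted map. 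Conceptually, $\mathcal I_0$ is the classical \emph{reversor} of the standard map, i.e.\ $\mathcal I_0\circ s_N^{-1}\circ\mathcal I_0=s_N$ --- a one-line check from $s_N^{-1}(u,v)=(v,g(v)-u)$ --- and the computation above verifies that this reversibility is compatible with the skew extension.

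I do not expect a real obstacle: the statement is an elementary verification. The only points demanding a little attention are bookkeeping ones --- that the twist $P_x\circ A^{[N]}(z,w)$ enters only the ``position'' coordinate of $s_N$, so that after the two applications of $\mathcal I_0$ it reappears in the first coordinate with the correct sign, and the reindexing of the matrix power $A^{[N]}\circ A^{-[2N]}=A^{-[N]}$ that converts the forward twist into the backward one. Once the explicit formula for $f_N^{-1}$ is written down, matching it against the target map is immediate.
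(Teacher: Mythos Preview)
Your computation is correct: the explicit formula for $f_N^{-1}$ together with the reversibility $\mathcal I_0\circ s_N^{-1}\circ\mathcal I_0=s_N$ of the standard map yields the stated conjugacy exactly as you wrote. The paper itself does not prove this lemma at all---it simply quotes it as Lemma~1 of \cite{bergercarrasco2014}---so your direct verification is the natural way to fill in the details; the only implicit assumption worth flagging is that the identity $A^{[N]}\circ A^{-[2N]}=A^{-[N]}$ uses $[2N]=2[N]$, which is harmless here since one may take $N$ integer.
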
 
 
This lemma allows us to prove certain properties for $f_N$ and $f_N^{-1}$ only by considering the map $f_N$, since the involution tell us that $f_N$ and $f_N^{-1}$ behave in the same way up to exchanging the $x$ and $y$ coordinates. 

\begin{lemma}[\cite{bergercarrasco2014}, Corollary $5$]
\label{lemma.transversatilityunstable}
For $N$ sufficiently large, there exists a $C^1$-neighborhood $\mathcal{U}_N$ of $f_N$ such that for any $g\in \mathcal{U}_N$, for any point $m\in \T^4$ and for any unit vector $v = (v_x,v_y,v_z,v_w)$ in $E^{uu}_g(m)$, we have
\[
\lambda^N\left( \|P_x(e^u)\| - 3\lambda^N \right) \leq |v_x| \leq \lambda^N \left(\|P_x(e^u)\|+ 3\lambda^N\right).
\]
\end{lemma}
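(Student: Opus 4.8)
The plan is to compute the strong unstable bundle of $f_N$ itself with enough precision and then transfer the estimate to nearby $g$ using the continuous dependence of $E^{uu}$; since the neighborhood $\mathcal{U}_N$ is allowed to depend on $N$, the second step is soft.

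First I would pin down the shape of $E^{uu}_{f_N}$. As $f_N$ is a partially hyperbolic skew-product over the hyperbolic automorphism $A^{[2N]}$ with center bundle $E^{c}_{f_N}(m)=\R^{2}\times\{0\}$, one checks that $E^{cs}_{f_N}(m)=E^{ss}_{f_N}(m)\oplus E^{c}_{f_N}(m)=\R^{2}\times\R e^{s}$; hence $E^{uu}_{f_N}(m)$, being transverse to this $3$-plane, projects isomorphically onto $\R e^{u}$, and one may write $E^{uu}_{f_N}(m)=\R\cdot(\xi(m),e^{u})$ for a continuous $\xi\colon\T^{4}\to\R^{2}$ to be estimated. (Here one uses $\|P_x(e^{u})\|=|(e^{u})_x|>0$, valid because $\mu$ is irrational while $A$ has integer entries.) Inserting this into $Df_N(m)\,E^{uu}_{f_N}(m)=E^{uu}_{f_N}(f_N m)$ via the block form \eqref{eq.derivative}, and using $A^{[N]}e^{u}=\mu^{[N]}e^{u}$, $A^{[2N]}e^{u}=\mu^{[2N]}e^{u}$, turns invariance into the functional equation
\[
\mu^{[2N]}\,\xi(f_N m)=Ds_N(m)\,\xi(m)+\mu^{[N]}P_x(e^{u}).
\]
Since $\|\mu^{-[2N]}Ds_N(m)\|\le 2N\lambda^{[2N]}$, a uniform contraction for $N$ large, I would solve this by iterating backwards, obtaining $\xi(m)=\mu^{[N]-[2N]}\sum_{j\ge0}\big(\prod_{i=1}^{j}\mu^{-[2N]}Ds_N(f_N^{-i}m)\big)P_x(e^{u})$; by uniqueness of the strong unstable bundle this line must be $E^{uu}_{f_N}(m)$. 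The leading ($j=0$) term is $\mu^{[N]-[2N]}P_x(e^{u})$, which for integer $N$ equals $\lambda^{N}P_x(e^{u})$, and the tail is a geometric series of ratio $2N\lambda^{[2N]}$, hence uniformly of size $o(\lambda^{N})$. Thus $\xi(m)=\lambda^{N}\big(P_x(e^{u})+r_N(m)\big)$ with $\sup_m\|r_N(m)\|\to0$; after normalizing the spanning vector $(\xi(m),e^{u})$, whose Euclidean norm is $1+O(\lambda^{2N})$, this yields $|v_x|=\lambda^{N}\|P_x(e^{u})\|\big(1+o(1)\big)$ for every unit $v\in E^{uu}_{f_N}(m)$, with the $o(1)$ bounded by $2\lambda^{N}/\|P_x(e^{u})\|$ once $N$ is large.

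For the robust statement I would fix such an $N$ and invoke the standard fact that $E^{uu}$ depends continuously, in the $C^{0}$ sense and uniformly over $\T^{4}$, on the diffeomorphism in the $C^{1}$ topology. Choosing $\mathcal{U}_N$ small enough that $E^{uu}_g(m)$ lies within angle $\lambda^{2N}$ of $E^{uu}_{f_N}(m)$ for all $g\in\mathcal{U}_N$ and $m\in\T^{4}$, the number $|v_x|$ for a unit $v\in E^{uu}_g(m)$ differs from its $f_N$-value by at most $\lambda^{2N}$, and the bounds $\lambda^{N}(\|P_x(e^{u})\|\pm 3\lambda^{N})$ follow by absorbing both errors — the intrinsic $f_N$-error, of order $N\lambda^{2N}$, and the $C^{1}$-perturbation — into the slack supplied by the constant $3$.

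I do not expect any serious obstacle. The one point requiring genuine care is the uniform-in-$m$ control of the remainder $r_N$: one must verify that $2N\lambda^{[2N]}<\tfrac12$ (indeed $\to 0$) for $N$ large and that the tail of the series for $\xi$ is uniformly $o(\lambda^{N})$. The only mild subtlety is that $\mathcal{U}_N$ must be shrunk in a way depending on $N$, which is precisely what the statement permits.
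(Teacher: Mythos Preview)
The paper does not give its own proof of this lemma: it is quoted verbatim as Corollary~5 of \cite{bergercarrasco2014}, with no argument beyond the remark that the analogous stable estimate follows via the involution of Lemma~\ref{lemma.involution}. Your proposal is therefore not competing with any proof in the present paper.

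That said, your argument is correct and is exactly the natural way to establish the result. Writing $E^{uu}_{f_N}(m)=\R\cdot(\xi(m),e^u)$ (using that $E^{cs}_{f_N}=\R^2\times\R e^s$ is $Df_N$-invariant, hence $E^{uu}$ is a graph over $\R e^u$), deriving the cohomological equation $\mu^{[2N]}\xi(f_Nm)=Ds_N(m)\xi(m)+\mu^{[N]}P_x(e^u)$ from the block form \eqref{eq.derivative}, and solving it as a geometric series with ratio $2N\lambda^{[2N]}\to 0$ gives $\xi(m)=\lambda^{N}P_x(e^u)+O(N\lambda^{3N})$ uniformly in $m$; normalization costs only a factor $1+O(\lambda^{2N})$. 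The extension to $g\in\mathcal{U}_N$ by the uniform $C^0$-continuity of $m\mapsto E^{uu}_g(m)$ with respect to $g$ in the $C^1$ topology is standard, and since $\mathcal{U}_N$ may depend on $N$ the extra $\lambda^{2N}$ of slack is harmless. The only cosmetic point is that $[2N]-[N]$ need not equal $N$ for non-integer $N$, but the discrepancy is at most $1$ and is swallowed by the constant $3$; your remark about integer $N$ already flags this.
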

By lemma \ref{lemma.involution}, similar statement holds for the strong stable direction, but projecting on the $y$ direction.

For $m\in\T^4$, we identify $T_m\T^4=\R^4$; since the center bundle $E^c$ of $f_N$ is tangent to the horizontal fibers, by an abuse of notation we write
$E^c=\R^2 \times \{0\}=\R^2$ (the first two coordinates). We define $\pi_h:\T^2\times\T^2\To\T^2, proj_h:\R^4\To E^c$ to be the corresponding projections. Similarly, since 
the hyperbolic directions  $E^s_A$ and $E^u_A$ of $A$ on $\T^2$ are constant, by the same abuse of notation we will write $E^*_A\subset \R^4, *=s,u$ for the directions that determine on $\R^4$. If $v\in \R^4$ we write $v= (v_c,v_s, v_u)$ using the decomposition  
\begin{equation}
\label{eq.coordinatesystem}
\R^4 = E^c \oplus E^s_A \oplus E^u_A
\end{equation}
For $\alpha>0$ we define the stable cone of size $\alpha$ over $m$ by 
\[
\mathcal{C}^s_{\alpha}(m) =\{v\in T_m\T^4: v=(v_c,v_s, v_u): \|v_c+v_u\| \leq \alpha \|v_s\|\}.
\]
Note that $\mathcal{C}^s_{\alpha}=\{\mathcal{C}^s_{\alpha}(m)\}_{m\in\T^4}$ is a continuous cone field over $M$. Analogously, we define the unstable cone field  $\mathcal{C}^u_{\alpha}$ of size $\alpha$.

\begin{lemma}
\label{lemma.cones}
Fix $\alpha>0$. If $N$ is sufficiently large there exists an open neighborhood $\mathcal{U}_N$ of $f_N$ in \diff\ such that for every $g\in \mathcal{U}_N$,
the strong stable direction $E^{ss}_g$ of $g$ is contained in $\mathcal{C}^s_{\alpha}$. Similarly, the strong unstable direction $E^{uu}_g$ of $g$ is contained in $\mathcal{C}^u_{\alpha}$. 
\end{lemma}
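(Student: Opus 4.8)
The plan is to establish a uniform cone-invariance estimate for $f_N$ and then deduce the statement for nearby $g$ by continuity of the derivative and of the invariant bundles. First I would work directly with $Df_N$ in the block form \eqref{eq.derivative}, using the coordinate system \eqref{eq.coordinatesystem}: write a vector as $v=(v_c,v_s,v_u)$ with $v_c\in E^c=\R^2$, $v_s\in E^s_A$, $v_u\in E^u_A$. The key computation is to see how $Df_N$ acts on a vector in (the complement of) the stable cone. Applying \eqref{eq.derivative}: the $E^u_A$ component is multiplied by $\mu^{[2N]}$, the $E^s_A$ component by $\lambda^{[2N]}$, and the center component evolves by $Ds_N(m)$ plus a contribution $P_x\circ A^{[N]}(v_s,v_u)$ of size at most (roughly) $2\mu^{[N]}\|v_u\| + \lambda^{[N]}\|v_s\|$ coming from the off-diagonal block. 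The crucial point, which is the source of the $1/2N \le \|Ds_N\| \le 2N$ bound quoted right before the lemma, is that $Ds_N$ can expand a center vector by at most a factor $2N$. So if $v\notin \mathcal{C}^s_\alpha(m)$, i.e. $\|v_c+v_u\| > \alpha\|v_s\|$, I want to show $Df_N v \notin \mathcal{C}^s_\alpha(f_N m)$, which after the computation reduces to comparing $\mu^{[2N]}$ against $2N\cdot(\text{stuff})$ and $\lambda^{[2N]}$; since $\mu>1$ is fixed, $\mu^{[2N]}/(2N) \to \infty$ and $\lambda^{[2N]}\to 0$, so for $N$ large the unstable component dominates overwhelmingly and cone-invariance for the backward-contracted (stable) cone under $f_N$ follows. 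By Lemma \ref{lemma.involution}, the symmetric statement gives forward invariance of $\mathcal{C}^u_\alpha$ under $f_N$.

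Next, invariance of a cone field is a $C^1$-open condition: if $Df_N(m)\big(\overline{\mathcal{C}^s_\alpha(m)}\setminus\{0\}\big)$ lies in the interior of $\mathcal{C}^s_\alpha(f_N m)$ with a uniform margin (which the above estimate provides, with room to spare since the relevant quantities are exponentially separated in $N$), then for any $g$ in a $C^1$-neighborhood $\mathcal{U}_N$ of $f_N$ one still has $Dg(m)(\overline{\mathcal{C}^s_\alpha(m)}\setminus\{0\})\subset \mathcal{C}^s_\alpha(g m)$ for all $m$, because $Dg$ is uniformly $C^0$-close to $Df_N$ and the cone field is continuous and compact-fibered. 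I would shrink $\mathcal{U}_N$ if necessary so that the same holds for the unstable cone under $Dg^{-1}$.

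Finally, to conclude that $E^{ss}_g\subset \mathcal{C}^s_\alpha$, observe that by Lemma \ref{lemma.cones}'s hypotheses $g\in\PH$ (shrinking $\mathcal{U}_N$ using openness of $\PH$), so $E^{ss}_g(m)$ is defined and characterized dynamically: $E^{ss}_g(m)=\{v : \|Dg^n(m)v\| \to 0$ exponentially faster than any vector outside$\}$, equivalently $E^{ss}_g(m)=\bigcap_{n\ge 0} Dg^{n}\big(\mathcal{C}^s_\alpha(g^{-n}m)\big)$ once we know the cone is $Dg$-invariant and that $Dg$ eventually contracts inside it relative to the complement — but for the statement as written I only need containment, so it suffices to note that any $v\in E^{ss}_g(m)\setminus\{0\}$, if it were outside $\mathcal{C}^s_\alpha(m)$, would by backward cone-invariance have all its preimages $Dg^{-n}(m)v$ outside the stable cone, hence in particular with non-negligible component along $E^u_A\oplus E^c$; running the expansion estimate backward shows $\|Dg^{-n}(m)v\|$ cannot decay, contradicting $v\in E^{ss}_g$ (whose vectors are exponentially contracted in backward-positive... rather, expanded backward, so this needs the complementary direction). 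The clean way is: the stable cone is backward-invariant and forward-expanded outside, so $E^{ss}_g$, being the unique line contracted by all $Dg^n$, must lie in the intersection $\bigcap_n Dg^n(\mathcal{C}^s_\alpha)=\mathcal{C}^s_\alpha$ (the intersection stays inside since the cone is forward-mapped into itself by $Dg^{-1}$... ). I expect the main obstacle to be exactly this last bookkeeping step: pinning down precisely which cone is invariant under $Dg$ versus $Dg^{-1}$, and ensuring the direction of contraction/expansion matches so that the invariant bundle lands in the asserted cone; the estimates themselves are routine given $1/2N\le\|Ds_N\|\le 2N$ and the exponential gap supplied by $A^{[N]}, A^{[2N]}$.
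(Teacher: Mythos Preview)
Your approach is essentially the paper's: compare the polynomial bounds $\tfrac{1}{2N}\le \|Ds_N\|\le 2N$ and $\|P_x\circ A^{[N]}\|\le \lambda^{-N}$ on the upper blocks of \eqref{eq.derivative} against the exponential rates $\lambda^{\pm 2N}$ of $A^{[2N]}$, deduce cone control for $f_N$, and note these inequalities are $\mathcal C^1$-open. The paper's proof is in fact terser than yours; it simply records those bounds and says ``a simple calculation'' plus $\mathcal C^1$-stability.

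The only loose end is your self-flagged bookkeeping in the last paragraph, so let me state the clean version once. What your computation actually shows is that $Dg^{-1}\big(\mathcal C^s_\alpha(gm)\big)\subset \mathcal C^s_\alpha(m)$ strictly (equivalently, the complement is $Dg$-forward invariant, which is what you verified), and that for $v\in\mathcal C^s_\alpha$ one has $\|Dg^{-1}v\|\ge c\,\lambda^{-2N}\|v\|$ since $\|v\|$ is comparable to $\|v_s\|$ there. Then
\[
L(m):=\bigcap_{n\ge 0} Dg^{-n}\big(\mathcal C^s_\alpha(g^n m)\big)\subset \mathcal C^s_\alpha(m)
\]
is a $Dg$-invariant line bundle on which $Dg$ contracts at rate $\le C\lambda^{2N}$. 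Since any vector with nontrivial $E^c\oplus E^u_A$ component is contracted by $Dg$ at rate no better than $(2N)^{-1}\gg \lambda^{2N}$, the domination forces $L=E^{ss}_g$, hence $E^{ss}_g\subset\mathcal C^s_\alpha$. The unstable case follows by Lemma~\ref{lemma.involution} or by the symmetric argument with $Dg$ in place of $Dg^{-1}$. Your attempted contrapositive (``if $v\in E^{ss}_g$ were outside the cone then $Dg^{-n}v$ stays outside'') went in the wrong time direction: forward-invariance of the complement tells you about $Dg^{n}v$, not $Dg^{-n}v$, and you then still need a growth estimate there, which is messier because the complement contains pure center vectors; the nested-intersection formulation above avoids this.
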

\begin{proof}
By \eqref{eq.derivative} we deduce 
\begin{align*}
\frac{1}{2N} &\leq m(Ds_N) \leq \|Ds_N \| \leq 2N \\ 
\lambda^{N} &\leq m(P_x \circ A^N) \leq \|P_x \circ A^N \|\leq \lambda^{-N}.
\end{align*}
On the other hand, the strength of the expansion (or contraction) of $A^{2N}$ is $\lambda^{-2N}$ (respectively $\lambda^{2N}$), which is exponentially bigger than the estimates above. Therefore, a simple calculation for $N$ sufficiently large concludes the proof of the lemma for the case $g=f_N$. Noting that all bounds are stable by $\mathcal{C}^1$ perturbations we finish the proof.
\end{proof}

Lemma \ref{lemma.cones} states that for $N$ large enough, the strong stable direction is close to the stable direction of the linear Anosov $A$. Similarly, the strong unstable direction is close to the unstable direction of the linear Anosov $A$. 

Define $I=I(N)= (-2N^{-\frac{3}{10}},2 N^{-\frac{3}{10}})$ and write $C = \{\frac{\pi}{2}+I\} \cup \{\frac{3\pi}{2} +I\}$. Consider the regions 
\begin{align}
&Crit^u = C \times S^1  \times \T^2 \\
&Crit^s = S^1 \times C  \times \T^2 .
\end{align}
We define the \textit{good regions} as the sets $G^* := \T^4- Crit^*$, for $*=s,u$. For each $\theta>0$, we define the horizontal cone of size $\theta$ along the center, as 
\[
\C^{hor}_{\theta} := \{ v = (v_x,v_y)\in E^c: \|v_y\|\leq \theta \|v_x\|\}. 
\]
We define similarly the vertical cone, but exchanging the roles of $v_x$ and $v_y$ in the definition, and we denote it by $\C^{ver}_{\theta}$. Fix $\theta = N^{-\frac{3}{5}}$. 

\begin{lemma}
\label{lemma.centerconeestimates}
For every $N$ sufficiently large there exists an open neighborhood $\mathcal{U}_N$ of $f_N$ with the following property: if $g\in \mathcal{U}_N, m\in G^u$ then
\[
v\in \C^{hor}_{\theta}\Rightarrow proj_h(Dg(m) v) \subset \C^{hor}_{\theta} \textrm{ and } \|Dg(m)v\| > N^{\frac{1}{2}} \|v\|.
\] 
Furthermore, if $\gamma$ is a $C^1$-curve contained in a center leaf satisfying 
\begin{itemize}
	\item $proj_h\left(\frac{d\gamma}{dt}(t)\right)\in\C^{hor}_{\theta}\ \forall t$, and
	\item it has length greater than $N^{-\frac{3}{10}}$,
\end{itemize}	 
then the curve $g\circ \gamma$ has length greater than $4\pi$ and its horizontal projection is tangent to $\C^{hor}_{\theta}$.
\end{lemma}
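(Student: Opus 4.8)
The plan is to reduce everything to a direct computation with the block form of $Df_N$ in \eqref{eq.derivative}, using that on the good region $G^u$ the center block $Ds_N(x,y)$ is uniformly expanding on horizontal cones, and then absorb perturbations via the uniform bounds. First I would analyze the linear map $Ds_N(x,y)=\begin{pmatrix}N\cos x+2 & -1\\ 1 & 0\end{pmatrix}$ acting on a vector $v=(v_x,v_y)$ with $|v_y|\le\theta|v_x|$. Its image is $(( N\cos x+2)v_x - v_y,\ v_x)$. When $m\in G^u$ we have $x\notin C$, so $|\cos x|$ is bounded below by roughly $\sin(2N^{-3/10})\sim 2N^{-3/10}$, hence $|N\cos x+2|\gtrsim 2N^{7/10}$, which dominates $2$ and dominates $\theta^{-1}=N^{3/5}$ for large $N$. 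Therefore the new first coordinate has absolute value at least $(2N^{7/10}-2-\theta)|v_x|\gtrsim N^{7/10}|v_x|$, while the new second coordinate is exactly $v_x$; this gives both the cone invariance ($|v_x^{new}|/|v_y^{new}| = |(N\cos x+2)v_x-v_y|/|v_x|\gtrsim N^{7/10}\gg\theta^{-1}$, so we stay in $\C^{hor}_\theta$ with huge room to spare) and the expansion $\|Ds_N v\|\ge N^{7/10}|v_x|\gtrsim N^{7/10}\|v\|\ge N^{1/2}\|v\|$. The off-diagonal term $P_x\circ A^{[N]}$ contributes to $Dg(m)v$ only through the center component and has norm at most $\lambda^{-N}$, which is negligible against $N^{7/10}$ for $N$ large; and a $C^1$-small perturbation $g$ changes all these quantities by an amount that can be made arbitrarily small, so on a neighborhood $\mathcal{U}_N$ everything survives. (One must be slightly careful that $proj_h(Dg(m)v)$ still makes sense — for $g$ near $f_N$ the center block of $Dg$ is close to $Ds_N$ and the argument is unchanged, which is exactly Remark \ref{continuitycoherent}.)

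For the second part I would iterate the pointwise statement along the curve $\gamma$. Since $\gamma$ lies in a single center leaf and its tangent stays in $\C^{hor}_\theta$, the $x$-coordinate is a monotone reparametrization: projecting $\gamma$ to the $x$-circle is an immersion whose image is an arc $J\subset S^1$ of length $>N^{-3/10}$. The key observation is that $J$ cannot be contained in the bad set $C$ for the $x$-coordinate: $C$ consists of two arcs each of length $|I|=4N^{-3/10}$, wait — one must check the size more carefully; actually $I=(-2N^{-3/10},2N^{-3/10})$ has length $4N^{-3/10}$, so an arc of length merely $>N^{-3/10}$ need not escape $C$. Here is where I would use that $\gamma$ has a definite proportion of its length outside $Crit^u$, or rather re-examine the bookkeeping: the relevant point is that after applying $g$ once, the portion of $\gamma$ lying in $G^u$ — which has $x$-length at least, say, $N^{-3/10}$ minus the two bad arcs, hence a fixed fraction if the constants are chosen right — gets its length expanded by $N^{1/2}$ in the $x$-direction, so $g\circ\gamma$ has length at least (fraction)$\cdot N^{1/2}\cdot N^{-3/10}=c\, N^{1/5}\to\infty$, which exceeds $4\pi$ for $N\ge N_0$. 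The horizontal-cone tangency of $g\circ\gamma$ is immediate from the pointwise cone-invariance applied along the good portion, and along a short bad portion the cone can only tilt by a controlled amount that the subsequent expansion corrects; alternatively, since the claim only asserts the final curve is tangent to $\C^{hor}_\theta$ after one application and we may have chosen $\theta$ and $|I|$ compatibly, the bad arcs are swept out with bounded distortion.

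The main obstacle I anticipate is precisely this interplay between the width $|I|\sim N^{-3/10}$ of the critical strip, the minimal length $N^{-3/10}$ assumed for $\gamma$, and the threshold $4\pi$: one has to verify that the constants line up so that a curve of length exactly on the order of the critical-strip width, after a single application of $g$, already wraps around the torus several times. This works because the expansion factor $N^{1/2}$ on the good part is a large power of $N$ while the loss from the bad part is only a bounded multiplicative constant (the two bad arcs have total $x$-length comparable to, but with the right constant strictly less than, the guaranteed length), so the net gain is a positive power of $N$. Making the ``bounded distortion on the bad part'' rigorous — controlling how much $\gamma$ can shrink or exit $\C^{hor}_\theta$ while crossing $C$ — is the one place where I'd need the estimate $\|Ds_N\|\ge (2N)^{-1}$ (no catastrophic contraction) together with $\|D^2 s_N\|\le N$ to bound the cone distortion, and I would organize the estimate so that the good part alone already produces length $>4\pi$, treating the bad part as a harmless bounded factor.
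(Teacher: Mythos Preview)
The paper's own proof is a one-line citation to lemmas in \cite{obataerg}, so there is no in-paper argument to compare against; your direct computation is in spirit exactly what that reference does, and for the first part your estimate is correct: on $G^u$ one has $|\cos x|\gtrsim 2N^{-3/10}$, hence $|N\cos x+2|\gtrsim N^{7/10}$, and the cone invariance plus $N^{1/2}$-expansion follow. One correction: your claim that the off-diagonal block $P_x\circ A^{[N]}$ ``has norm at most $\lambda^{-N}$, which is negligible against $N^{7/10}$'' is false --- $\lambda^{-N}$ is exponentially large, not small. The reason this block is irrelevant is different: for $v\in\C^{hor}_\theta\subset E^c=\R^2\times\{0\}$ the last two coordinates of $v$ vanish, so $P_x\circ A^{[N]}$ contributes nothing to $Df_N(m)v$; then $C^1$-closeness of $g$ to $f_N$ and Proposition~\ref{coherence} handle the perturbation.

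For the second part you correctly identify a genuine difficulty and then do not resolve it. A curve of length $N^{-3/10}$ tangent to $\C^{hor}_\theta$ has $x$-projection of length $\approx N^{-3/10}$, while each component of $C$ has length $|I|=4N^{-3/10}$; so the curve \emph{can} lie entirely in $Crit^u$, and there (near $x=\pi/2$) the map $Ds_N$ is essentially a shear with no expansion. Your attempted fix --- ``the good part alone already produces length $>4\pi$'' --- cannot work when the good part is empty. The actual resolution is that the ``Furthermore'' clause is meant to be read under the standing hypothesis of the lemma, i.e.\ for $\gamma\subset G^u$; this is how it is applied in Proposition~\ref{prop.goodcurves}, where the initial curve $\gamma^+$ is explicitly taken inside $G^u$. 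Under that hypothesis the second part is immediate from the first: every tangent vector is expanded by at least $N^{1/2}$, so $\mathit{length}(g\circ\gamma)\ge N^{1/2}\cdot N^{-3/10}=N^{1/5}>4\pi$ for $N$ large, and cone-tangency of the image follows pointwise. Your discussion of bounded distortion across the bad strips is the mechanism needed for the \emph{iteration} in Proposition~\ref{prop.goodcurves} (once the curve is long it cannot avoid $G^u$), but it is not needed --- and, as you noticed, not sufficient --- for the lemma as stated.
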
 

\begin{proof}
The proof follows from the proof of lemma $3.9$ and $5.5$ in \cite{obataerg}
\end{proof}


As an easy consequence of  Remark \ref{continuitycoherent}, we have the following proposition:

\begin{proposition}
\label{coherence}
Fix $\varepsilon>0$ small, for $N$ large enough there is a $C^1$-neighborhood $\mathcal{U}_N$ of $f_N$, such that if $g\in \mathcal{U}_N$ then $g$ is dynamically coherent, its center leaves are $C^1$-submanifolds, $g$ is leaf conjugated to $f_N$ and for every $m\in \T^4$ the $C^1$-distance between $W^c_g(m)$ and $W^c_{f_N}(m)$ is smaller than $\varepsilon$.
\end{proposition}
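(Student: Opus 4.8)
The plan is to deduce the proposition directly from Theorem \ref{leafconjugacy} together with Remark \ref{continuitycoherent}, the only genuinely new point being the upgrade from Hausdorff closeness of the center leaves to $C^1$ closeness. First I would record that, for $N$ large, $f_N$ is a partially hyperbolic skew-product: by Lemma \ref{lemma.cones} the bundles $E^{ss}_{f_N}$ and $E^{uu}_{f_N}$ lie in the narrow cones $\mathcal{C}^s_\alpha$ and $\mathcal{C}^u_\alpha$ around the (constant) Anosov directions $E^s_A$ and $E^u_A$, while $E^c_{f_N}=\R^2\times\{0\}$ is tangent to the trivial fibration $\mathcal{F}^c_{f_N}=\{\T^2\times\{(z,w)\}\}$. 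This center foliation is $C^\infty$, hence differentiable in the sense required by Theorem \ref{leafconjugacy}, so there is a $C^1$-neighborhood $\mathcal{U}_N$ of $f_N$ in \diff\ such that every $g\in\mathcal{U}_N$ is partially hyperbolic, dynamically coherent, and leaf conjugate to $f_N$, with the leaf conjugacy depending continuously on $g$; in particular the center leaves $W^c_g(m)$ are $C^1$-submanifolds.

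Next, by Remark \ref{continuitycoherent}, after shrinking $\mathcal{U}_N$ the center foliation $\Fc_g$ is a trivial fibration with leaves $C^1$-diffeomorphic to $\T^2$, and these leaves converge in the Hausdorff metric to the horizontal tori as $g\to f_N$. It remains to see that this convergence can be taken in the $C^1$ topology. For this I would use that each center leaf of $g$ is the transverse intersection $W^{cs}_g(m)\cap W^{cu}_g(m)$ of a center-stable and a center-unstable leaf (Definition \ref{dynamicalcoherence}); the leaves of $\mathcal{F}^{cs}_g$ and $\mathcal{F}^{cu}_g$ are produced in \cite{hps} as fixed points of graph transforms acting on spaces of sections that are $C^1$ along the fibers, and these fixed points vary continuously in the relevant $C^1$ topology as $g$ varies. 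Moreover Lemma \ref{lemma.cones} guarantees that $E^{cs}_g$ and $E^{cu}_g$ make an angle bounded away from zero uniformly on the compact manifold $\T^4$ (they are $C^1$-close to the $g$-independent subspaces $(\R^2\times\{0\})\oplus E^s_A$ and $(\R^2\times\{0\})\oplus E^u_A$, which are transverse), so the intersection $W^c_g(m)$ depends continuously on $g$ in the $C^1$ topology, uniformly in $m$. Hence, shrinking $\mathcal{U}_N$ once more, we obtain that the $C^1$-distance between $W^c_g(m)$ and $W^c_{f_N}(m)$ is less than $\varepsilon$ for all $g\in\mathcal{U}_N$ and all $m\in\T^4$.

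The main obstacle is precisely this last step: making the $C^1$-continuous dependence of the center leaves on $g$ explicit, since Remark \ref{continuitycoherent} as stated only records Hausdorff convergence. A cleaner route avoiding graph-transform bookkeeping is to invoke the normal hyperbolicity of the smooth center foliation of $f_N$ together with the persistence theorem of \cite{hps}: this directly produces, for $g$ in a $C^1$-neighborhood of $f_N$, a $g$-invariant foliation with $C^1$ leaves that are $C^1$-close to the horizontal tori; one then only has to identify it with the center foliation supplied by dynamical coherence, which follows from uniqueness of the normally hyperbolic invariant foliation. Everything else is soft.
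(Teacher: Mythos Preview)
Your approach is essentially the same as the paper's: the paper does not give a separate proof at all, simply stating that the proposition is ``an easy consequence of Remark \ref{continuitycoherent}'' (which in turn rests on Theorem \ref{leafconjugacy}, i.e.\ the HPS persistence theorem). You follow the same route but are more scrupulous: you correctly notice that Remark \ref{continuitycoherent} only records Hausdorff convergence of the center leaves, whereas the proposition asserts $C^1$ closeness, and you supply the missing argument via the $C^1$ continuity of the graph-transform fixed points (or, alternatively, via normal hyperbolicity and uniqueness). This is a genuine detail the paper suppresses, and your way of filling it is standard and correct; the paper is implicitly relying on the same HPS machinery to carry the $C^1$ statement, it just does not spell it out.
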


\section{Topologically mixing: proof of the Main Theorem}

\begin{lemma}
\label{lemma.goodpoint}
For every $N$ large enough, there exists a $C^1$-neighborhood $\mathcal{U}_N$ of $f_N$ such that any $g\in \mathcal{U}_N$ verifies the following properties:
\begin{enumerate}
\item If $\gamma^u\subset\Fu_g$ is non trivial curve then there exist a point $m\in \gamma^u$ and a number $n_u \geq 0$ such that $g^n(m)\in G^u$ for every $n\geq n_u$.
\item If $\gamma^s\subset\Fs_g$ is non trivial curve then there exist a point $m\in \gamma^s$ and a number $n_s \geq 0$ such that $g^{-n}(m)\in G^s$ for every $n\geq n_s$.
\end{enumerate} 
\end{lemma}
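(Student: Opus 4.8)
The plan is to prove statement (1); statement (2) follows by applying (1) to $f_N^{-1}$ (and hence to $g^{-1}$) using Lemma~\ref{lemma.involution}, which exchanges the roles of the $x$ and $y$ coordinates and therefore exchanges $Crit^u$ with $Crit^s$ and the strong unstable with the strong stable foliation. So from now on fix a non-trivial curve $\gamma^u\subset\Fu_g$ and look for a point on it whose forward orbit is eventually trapped in $G^u=\T^4-Crit^u$.

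The first step is to use Lemma~\ref{lemma.transversatilityunstable} together with Lemma~\ref{lemma.cones}: for $g\in\mathcal U_N$, every unit vector in $E^{uu}_g$ has $x$-component comparable to $\lambda^N\|P_x(e^u)\|$ (in particular nonzero), and lies in the thin unstable cone $\mathcal C^u_\alpha$, so $E^{uu}_g$ is uniformly close to the constant line $E^u_A$ of the linear Anosov $A$. Consequently the image $\pi_h(\gamma^u)$ of $\gamma^u$ under the horizontal projection is a $C^1$ curve whose tangent direction is uniformly transverse to the vertical; more importantly, the projection of $\gamma^u$ to the $(z,w)$-torus is a curve tangent to a thin cone around $E^u_A$, hence (a piece of it) is a genuine arc of a leaf of the unstable foliation $\mathcal W^u_A$ on $\T^2$ up to a small controlled error. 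The key dynamical input is then that the expansion of $g$ along $E^{uu}_g$ is of order $\lambda^{-2N}$ (from the $A^{[2N]}$ block, which dominates everything by Lemma~\ref{lemma.cones}), so iterating forward, $g^n(\gamma^u)$ grows exponentially and its $(z,w)$-projection becomes, for $n$ large, arbitrarily long inside a near-unstable leaf of $A$.

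The second step is the measure/recurrence argument in the fibre direction. The set $Crit^u=C\times S^1\times\T^2$, where $C=\{\frac\pi2+I\}\cup\{\frac{3\pi}2+I\}$ with $|I|\asymp N^{-3/10}$, is a set whose "bad" constraint only involves the $x$-coordinate; in particular for the base point $(z,w)\in\T^2$ the condition $g^n(m)\in G^u$ for all large $n$ is really a condition to be checked after projecting to the base, where the dynamics is (close to) the linear Anosov $A^{[2N]}$. Since $A$ is a hyperbolic toral automorphism, Lebesgue-almost every point of a long unstable arc equidistributes, and in any case the forbidden region, pulled back, has density bounded away from $1$; using the Borel--Cantelli / genericity of points that avoid a shrinking-or-fixed-density bad set along the orbit, we find within $g^{n_0}(\gamma^u)$ — for suitable large $n_0$ — a subarc of points $m'$ whose entire forward orbit stays in $G^u$. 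Pulling this point back by $g^{-n_0}$ gives the desired $m\in\gamma^u$ with $n_u:=n_0$ (or slightly larger). Concretely, one can first use Lemma~\ref{lemma.cones} to reduce everything to the base automorphism, then invoke that an unstable arc of $A$ of length $\geq L$ meets any nonempty open set once $L$ is large, and a Baire/measure argument to get a single point whose whole orbit is trapped.

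The main obstacle I expect is the last quantitative claim: producing a single point whose \emph{entire} forward orbit (not just cofinitely many returns up to some bound) lies in $G^u$. Getting infinitely many consecutive iterates to avoid $Crit^u$ requires either a genuine shadowing/hyperbolicity argument in the base — the expansion $\lambda^{-2N}$ along $E^{uu}_g$ lets one choose, at each step, a nested sequence of subarcs avoiding the bad slab, whose intersection is nonempty by compactness — or an explicit construction of a point on the relevant unstable leaf of $A^{[2N]}$ whose $x$-orbit avoids the neighborhoods $C$ of $\pm\pi/2$; the width $N^{-3/10}$ of $I$ versus the expansion rate is exactly what makes the nested-intervals argument close. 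I would carry it out as a nested-intervals construction: inductively shrink $\gamma^u$ to subarcs $\gamma^u_0\supset\gamma^u_1\supset\cdots$ so that $g^{k}(\gamma^u_k)\subset G^u$, using at each stage that $g(\gamma^u_{k-1})$ is long enough (by the domination of $A^{[2N]}$) to contain a subarc mapping into $G^u$, and take $m\in\bigcap_k \gamma^u_k$.
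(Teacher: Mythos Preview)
Your nested-intervals construction in the final paragraph is exactly the paper's argument: choose $n_u$ so that $g^{n_u}(\gamma^u)$ is long enough, then inductively pick compact connected subarcs $\gamma^u_{n_u+1}\supset\gamma^u_{n_u+2}\supset\cdots$ with $g^j(\gamma^u_{n_u+n})\subset G^u$ for $j=0,\dots,n-1$, and take $m=g^{-n_u}(m^u)$ for any $m^u\in\bigcap_n\gamma^u_{n_u+n}$. The reduction of (2) to (1) via Lemma~\ref{lemma.involution} is also what the paper does.

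Where your plan goes wrong is the justification of the inductive step. You write that the condition $g^n(m)\in G^u$ ``is really a condition to be checked after projecting to the base, where the dynamics is (close to) the linear Anosov $A^{[2N]}$,'' and then invoke equidistribution of unstable arcs of $A$, Borel--Cantelli, and genericity. This is the wrong projection: $Crit^u=C\times S^1\times\T^2$ is a full product over the base $(z,w)$-torus and is determined solely by the fibre coordinate $x$, so the base dynamics and ergodic theory of $A$ are irrelevant. The correct mechanism is the one you stated in your first step but then abandoned: Lemma~\ref{lemma.transversatilityunstable} says the $x$-component of any unit vector in $E^{uu}_g$ lies in the narrow window $\lambda^N(\|P_x(e^u)\|\pm 3\lambda^N)$, so the $x$-coordinate along an unstable arc moves at nearly constant speed. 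Hence once the arc has length exceeding $\lambda^{-N}(\|P_x(e^u)\|+3\lambda^N)^{-1}$, an elementary length count gives $l(\gamma\cap G^u)/l(\gamma)>1-10N^{-3/10}$, and one can extract a compact connected subarc in $G^u$ of length at least $\tfrac12$. Its image under $g$ again exceeds the threshold, and the induction runs. No ergodic or Baire argument is needed; the whole thing is a direct transversality-versus-width comparison in the single coordinate $x$.
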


\begin{proof}
Suppose that $N$ is large enough and $\mathcal{U}_N$ is the $C^1$-open set given by lemma \ref{lemma.transversatilityunstable}. Let $g\in \mathcal{U}_N$ and $\gamma^u$ be a non trivial curve contained in a strong unstable manifold of $g$. Take $n_u\geq 0$ to be the smallest integer such that $\gamma^u_{n_u}:= g^{n_u}(\gamma^u)$ has length greater than $\lambda^{-N}\left(\|P_x(e^u)\| +3\lambda^N \right)^{-1}$.

By lemma \ref{lemma.transversatilityunstable}, we have that $\frac{l(\gamma^u_{n_u} \cap G^u)}{l(\gamma^u_{n_u})}> 1- 10 N^{-\frac{3}{10}} $. This implies that we may take $\gamma^u_{n_u+1} \subset \gamma^u_{n_u}$ a compact connected curve contained in $G^u$ with length greater than $\frac{1}{2}$. 

It is easy to see that for $N$ large enough, for any curve $\gamma$ contained in a strong unstable manifold with length greater than $\frac{1}{2}$, $g(\gamma)$ has length greater than $\lambda^{-N}\left(\|P_x(e^u)\| +3\lambda^N \right)^{-1}$. Therefore, the length of $g(\gamma^u_{n_u+1})$ is greater than $\lambda^{-N}\left(\|P_x(e^u)\| +3\lambda^N \right)^{-1}$.

Repeating this argument, we find a decreasing sequence of compact sets 
\[
\gamma^u_{n_u + 1} \supset \gamma^u_{n_u+2} \supset \cdots,
\] with the property that $g^j(\gamma^u_{n_u+n}) \subset G^u$, for $j=0, \cdots , n-1$. Take 
\[
m^u \in \displaystyle \bigcap_{n\in \N} \gamma^u_{n_u+n}.
\]
By construction, the point $m= g^{-n_u}(m^u)$ verifies the conclusion of our lemma. The argument for the stable curves is the same, working with backward iterates.

\end{proof}

Using the skew product structure of $f_N$, we can prove the following lemma:

\begin{lemma}
\label{lemma.intersectioncsu}
There exists a constant $R>0$ with the following property: for $N$ sufficiently large, there exists a $C^1$-neighborhood $\mathcal{U}$ of $f_N$ such that for any $g\in \mathcal{U}_N$ and any two points $p,q \in \T^4$ we have that for any $m_p\in W^c_g(p)$ there exists $m_q\in W^c_g(q)$ such that $W^{uu}_{g}(m_p;R) \cap W^{ss}_{g}(m_q;R) \neq \emptyset$. 
\end{lemma}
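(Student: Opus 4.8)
The plan is to push the question to the base torus, where it becomes a statement about the linear Anosov diffeomorphism $A$. Its stable and unstable foliations on $\T^2$ are the two transverse \emph{linear} foliations with irrational slopes $E^s_A,E^u_A$, and are therefore minimal — this minimality of the \emph{base} foliations, which holds automatically (unlike minimality of $\Fs_{f_N},\Fu_{f_N}$), is what will play the role of the $\varepsilon$-minimality used for the Anosov-deformation examples. The engine is the following combinatorial fact on $\T^2$: there are $R_1>0$ and $\theta_0>0$ such that every $C^1$ arc of length $\geq R_1$ whose tangent stays within $\theta_0$ of $E^u_A$ crosses every $C^1$ arc of length $\geq R_1$ whose tangent stays within $\theta_0$ of $E^s_A$, no matter where the arcs begin. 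Indeed, lifting to $\R^2$, such a thin-cone arc of length $\geq R_1$ stays $O(\theta_0 R_1)$-close to a genuine unstable segment of comparable length, which by minimality — uniformly in the base point, by compactness — is $\epsilon_0$-dense in $\T^2$ once $R_1$ is large; since the two cone fields are uniformly transverse, a point of the $E^s_A$-arc lying $\epsilon_0$-close to the $E^u_A$-arc is actually crossed by it, by a quantitative local product structure argument (choose $\epsilon_0$ small, then $R_1$ large, allowing a fixed surplus of length so the crossing occurs in the interior of both arcs).

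Next I reduce the lemma to this fact. By Proposition~\ref{coherence}, $g$ is dynamically coherent with center foliation a trivial fibration whose leaves are $\varepsilon$-$C^1$-close to the horizontal fibers $\T^2\times\{z\}$; fix a transversal $\tau=\{(x_0,y_0)\}\times\T^2$, which meets each center leaf in exactly one point, and use it to identify the center-leaf space with $\T^2$. The traces on $\tau$ of the center-unstable (resp. center-stable) leaves of $g$ then form a foliation $\overline{\mathcal{F}}^u$ (resp. $\overline{\mathcal{F}}^s$) of $\tau$. By Lemmas~\ref{lemma.cones} and \ref{lemma.transversatilityunstable} the bundles $E^{uu}_g,E^{ss}_g$ lie in thin cones around $E^u_A,E^s_A$, hence $E^{cu}_g,E^{cs}_g$ lie close to $E^c\oplus E^u_A$ and $E^c\oplus E^s_A$, and therefore $\overline{\mathcal F}^u,\overline{\mathcal F}^s$ are tangent to thin cones around $E^u_A,E^s_A$ — precisely the foliations to which the combinatorial fact applies. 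The same cone estimates yield, uniformly in $N$ and in $g\in\mathcal U_N$, bi-Lipschitz bounds between arc length along a strong unstable (resp. stable) leaf of $g$ and the displacement in $\tau$ of its image under the center holonomy inside the ambient center-unstable (resp. center-stable) leaf: moving by arc length $\ell$ along $W^{uu}_g(\cdot)$ moves the $E^u_A$-coordinate by an amount comparable to $\ell$, and projecting along center leaves onto $\tau$ is bi-Lipschitz because $\tau$ is uniformly transverse to them.

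Finally, the assembly. Given $p,q\in\T^4$ and $m_p\in W^c_g(p)$, set $R:=CR_1$ with $C$ the bi-Lipschitz constant above, plus slack. The center-holonomy image in $\tau$ of $W^{uu}_g(m_p;R)$ is a subarc of an $\overline{\mathcal F}^u$-leaf of length $\geq R_1$ through $\overline m_p$ (the $\tau$-trace of $W^c_g(m_p)$), and similarly $W^{ss}_g(q;R)$ yields a subarc of an $\overline{\mathcal F}^s$-leaf of length $\geq R_1$ through $\overline q$. By the combinatorial fact these cross at a point $\overline r$ with $d_\tau(\overline r,\overline m_p)\leq R_1$ along $\overline{\mathcal F}^u$ and $d_\tau(\overline r,\overline q)\leq R_1$ along $\overline{\mathcal F}^s$; let $L_r$ be the center leaf with $\tau$-trace $\overline r$, so $L_r\subset W^{cu}_g(m_p)\cap W^{cs}_g(q)$. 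Since $W^{uu}_g(m_p)$ is a complete section of the center foliation inside $W^{cu}_g(m_p)$ — explicit for $f_N$, where $W^{cu}_{f_N}(m_p)=\T^2\times W^u_A(p_2)$ and $W^{uu}_{f_N}(m_p)$ is a graph over $W^u_A(p_2)$, and in general by continuity of the invariant foliations together with plaque expansivity (Theorem~\ref{leafconjugacy}) — it meets $L_r$ in a single point $m'$; and since $m'\in L_r\subset W^{cs}_g(q)$ and the center-stable leaf is foliated by strong stable leaves with the center leaves as complete sections, $W^{ss}_g(m')$ meets $W^c_g(q)$ in a single point $m_q$. Then $m_q\in W^c_g(q)$ and $m'\in W^{uu}_g(m_p)\cap W^{ss}_g(m_q)$, and the bi-Lipschitz bounds of the previous paragraph turn the two distance estimates in $\tau$ into $m'\in W^{uu}_g(m_p;R)$ and $m'\in W^{ss}_g(m_q;R)$, after enlarging $R$ by a universal factor if necessary — which is the lemma. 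I expect the main obstacle to be the middle paragraph: identifying the center-leaf space with $\tau\cong\T^2$, checking that the sliced foliations inherit the cone-tameness and the complete-section property used at the end, and extracting the uniform bi-Lipschitz constants from Lemmas~\ref{lemma.cones} and \ref{lemma.transversatilityunstable}. For the skew-product $f_N$ all of this is transparent; the real content is its stability under $C^1$-perturbations, uniformly in $g\in\mathcal U_N$.
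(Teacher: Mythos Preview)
Your argument is correct and rests on the same idea as the paper's --- reduce to the base $\T^2$, where minimality of the linear foliations $\F^s_A,\F^u_A$ gives a uniform intersection radius --- but your implementation is considerably more elaborate than necessary. The paper simply uses the vertical projection $\pi_v:\T^4\to\T^2$ onto the base: for the genuine skew-product $f_N$ one has $\pi_v(W^{ss}_{f_N}(m;R_2))\supset W^s_A(\pi_v(m);R_1)$ (and similarly for $uu$) once $R_2$ is chosen via Lemma~\ref{lemma.cones}, so the $3$-manifolds $W^{cs}_{f_N}(\cdot;R_2)$ and $W^{cu}_{f_N}(\cdot;R_2)$ meet transversely in a full center leaf; robustness under $C^1$-perturbation is then immediate from the $C^1$-continuity of these compact manifolds (HPS) and the openness of transversality. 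No transversal $\tau$, no induced $1$-dimensional foliations $\overline{\F}^{u},\overline{\F}^{s}$, no quantitative cone-arc crossing lemma, and no bi-Lipschitz holonomy bookkeeping are needed.

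Your route does buy something: it works uniformly for all $g\in\mathcal U_N$ from the outset (via the cone estimates), rather than first handling $f_N$ and then perturbing, and your crossing lemma for thin-cone arcs is a reusable statement. But the paper's two-step argument (exact for $f_N$, then transversality is open) is shorter and avoids the ``complete section'' and holonomy-length discussions, which --- while correct --- are the places where your write-up is least self-contained.
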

\begin{proof}
First let us prove that if $N$ is sufficiently large, we have the conclusion of the lemma for $f_N$. The robustness of this property will then follow by a transversality argument.

We consider $\pi_v:\T^4\rightarrow\T^2$ the projection on the last two coordinates, and start by noticing that (due to minimality of the foliations $\Fs_A,\Fu_A$ in $\T^2$) there is a number $R_1>0$ with the property that for any $p,q\in \T^2$ the disc $W^s_{A}(p;R_1)$ intersects transversely $W^u_{A}(q;R_1)$. By lemma \ref{lemma.cones}, there exists a constant $R_2>R_1$ such that for any point $m\in \T^4$ we have $\displaystyle{\pi_v(W^{ss}_{f_N}(m;R_2)) \supset W^s_{A}(\pi_v(m);R_1)}$.  

For any $m\in \T^4$ we consider the $C^1$-submanifolds
\begin{align*}
W^{cs}_{f_N} (m;R_2) &= \displaystyle \bigcup_{p\in W^c_{f_N}(m)} W^{ss}_{f_N}(p;R_2)\\
W^{cu}_{f_N} (m;R_2) &= \displaystyle \bigcup_{p\in W^c_{f_N}(m)} W^{uu}_{f_N}(p;R_2)
\end{align*}
By our choice of $R_2,R_1$, for any $m_1, m_2 \in \T^4$ the sets $W^{cs}_{f_N,R_2}(m_1)$ and $W^{cu}_{f_N,R_2}(m_2)$ intersect transversely; indeed their intersection is a center leaf, which shows that the conclusion of the lemma holds for $f_N$. Since the manifolds $W^{cs}_{g} (\cdot;R_2), W^{cu}_{g} (\cdot;R_2)$ depend continuously on $g$ (\cite{hps} chapter 5), the lemma folows.
\end{proof}
We fix $R$ as in lemma above and recall that $\theta = N^{-\frac{3}{5}}.$

\begin{proposition}
\label{prop.goodcurves}
If $N$ is sufficiently large there exists $\mathcal{U}_N\subset \diff$ neighborhood of $f_N$ such that for any $g\in \mathcal{U}_N$ and any open set $U\subset \T^4$, there exists a number $n_u'\geq 0$ with the following property: for every $n\geq n_u'$ there exists a $C^1$ curve $\gamma^+_n\subset g^n(U)$ satisfying:
\begin{itemize}
	\item  $\gamma^+_n$ is contained in a center leaf.
	\item  $\pi_h(\gamma^+_n)$ is tangent to $\C^{hor}_{\theta}$.
	\item  $\gamma^+_n$ has length greater than $4\pi$.
	\item  $\displaystyle \bigcup_{q\in \gamma^+_n} W^{uu}_{g}(q;R) \subset g^n(U).$ 
\end{itemize}	
Similarly, there exists $n_s'\geq 0$ such that for any $n\geq n_s'$, there exists a $C^1$ curve $\gamma^-_n \subset g^{-n}(U)$ satisfying
\begin{itemize}
	\item  $\gamma^-_n$ is contained in a center leaf.
	\item  $\pi_h(\gamma^-_n)$ is tangent to $\C^{ver}_{\theta}$.
	\item  $\gamma^-_n$ has length greater than $4\pi$
	\item  $\displaystyle \bigcup_{q\in \gamma^-_n} W^{ss}_{g}(q;R) \subset g^{-n}(U).$ 
\end{itemize}
\end{proposition}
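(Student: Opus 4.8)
The strategy is to start from a small unstable curve inside $U$, iterate it forward until it becomes long, and then use Lemma \ref{lemma.goodpoint} together with Lemma \ref{lemma.centerconeestimates} to convert growth along the unstable foliation into a long horizontal curve inside a center leaf. Concretely: since $U$ is open, it contains a non-trivial piece $\gamma^u$ of an unstable leaf $\Fu_g$. By Lemma \ref{lemma.goodpoint}(1) there is a point $m\in\gamma^u$ and $n_u\geq 0$ such that $g^n(m)\in G^u$ for all $n\geq n_u$. Iterating $\gamma^u$ forward, its length grows exponentially (unstable curves are uniformly expanded), so after finitely many steps $g^n(\gamma^u)$ contains, around $g^n(m)$, an unstable curve of definite size that stays in $g^n(U)$. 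The first bullet to secure is the fourth one: thickening by local strong unstable manifolds of size $R$. Because $W^{uu}_g(\cdot;R)$ has uniformly bounded geometry and the forward iterates of an unstable curve expand, a sufficiently long unstable curve $\delta_n\subset g^n(U)$ through $g^n(m)$ will satisfy $\bigcup_{q\in\delta_n}W^{uu}_g(q;R)\subset g^n(U)$; this is a standard pulling-back argument using that $g^{-n}$ contracts the $R$-thickening faster than it contracts $\delta_n$ is expanded.

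Next I would produce the horizontal center curve. Fix a point $q_0\in\delta_n$ with $g^j(q_0)\in G^u$ for all large $j$ (available by the same Lemma \ref{lemma.goodpoint}(1) applied along $\delta_n$, or by keeping track of $m$). Inside the center leaf $W^c_g(q_0)$ choose a tiny $C^1$ curve $\gamma_0$ with horizontal projection in $\C^{hor}_\theta$ and length $>N^{-3/10}$; such a curve exists because center leaves are $C^1$ graphs $C^1$-close to horizontal tori (Proposition \ref{coherence}, Remark \ref{continuitycoherent}), so a short horizontal arc qualifies. Now apply the second half of Lemma \ref{lemma.centerconeestimates}: once $\gamma_0$ lies in a center leaf over the good region, one iterate makes $g\circ\gamma_0$ have length $>4\pi$ with horizontal projection still tangent to $\C^{hor}_\theta$; moreover by the first half of that lemma the cone condition and the $N^{1/2}$-expansion persist under further iteration, so the curve stays long and horizontal. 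The one subtlety is ensuring the whole of $\gamma_0$ (not just $q_0$) and its forward iterates remain in $G^u$; since $\gamma_0$ can be taken arbitrarily short and $G^u$ is open, and the center leaves vary continuously, choosing $\gamma_0$ small enough handles the first iterate, after which the cone/expansion estimate of Lemma \ref{lemma.centerconeestimates} keeps it in the good region automatically (a curve whose horizontal projection is in $\C^{hor}_\theta$ and whose length exceeds $N^{-3/10}$ cannot be contained in $Crit^u = C\times S^1\times\T^2$, since $C$ has components of width $4N^{-3/10}$). Then $\gamma^+_n := g^{n}(\gamma_0)$ (for the appropriate shifted $n$) lies in $g^n(U)$ provided $\gamma_0\subset\delta_n$-thickening is inside $g^n(U)$, which we arranged; combined with the fourth bullet above on $\delta_n$ this also gives $\bigcup_{q\in\gamma^+_n}W^{uu}_g(q;R)\subset g^n(U)$, since $\gamma^+_n$ sits inside the thickened region and strong unstable manifolds of points in $g^n(U)$ of size $R$ remain in $g^n(U)$ by the pulling-back argument.

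The stable statement follows verbatim by the involution Lemma \ref{lemma.involution}: applying everything to $g^{-1}$ (which is, up to the coordinate swap $\mathcal I$, a diffeomorphism of exactly the same type, exchanging the roles of $x$ and $y$, of $\Fu$ and $\Fs$, and of $\C^{hor}_\theta$ and $\C^{ver}_\theta$), one obtains $\gamma^-_n\subset g^{-n}(U)$ with the four dual properties. I expect the main obstacle to be the fourth bullet — the uniform $R$-thickening statement $\bigcup_{q\in\gamma^+_n}W^{uu}_g(q;R)\subset g^n(U)$ — because it requires simultaneously controlling the size of the initial unstable curve, the number of iterates needed for it to grow, and the (uniform, $g$-independent) local product structure of the strong foliations; all other steps are direct invocations of Lemmas \ref{lemma.goodpoint}, \ref{lemma.centerconeestimates} and Proposition \ref{coherence}.
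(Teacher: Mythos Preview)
Your strategy matches the paper's outline, but the order of operations is inverted at exactly the step you flagged, and this creates a real gap. You iterate the unstable curve to a long $\delta_n\subset g^n(U)$ and only \emph{then} try to choose a center curve $\gamma_0$ through $q_0\in\delta_n$ and secure the $R$-thickening at time $n$. This fails on two counts. First, $\delta_n$ is itself an unstable arc, so $\bigcup_{q\in\delta_n}W^{uu}_g(q;R)$ is just a longer arc in the same unstable leaf; the sentence ``$\gamma^+_n$ sits inside the thickened region'' therefore has no content for a center curve $\gamma^+_n$. Second, the open set $g^n(U)$ may be arbitrarily thin in the center direction near $q_0$ (center directions are neither uniformly expanded nor contracted), so there is no lower bound on the length of a center arc through $q_0$ that fits inside $g^n(U)$, and the assertion ``strong unstable manifolds of points in $g^n(U)$ of size $R$ remain in $g^n(U)$'' is false in general.

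The paper instead fixes both the center curve and the unstable thickening at the \emph{early} time $n_u$: one takes a short horizontal center curve $\gamma^+\subset W^c_g(m^+)\cap g^{n_u}(U)\cap G^u$ and a small $r>0$ with $\bigcup_{q\in\gamma^+}W^{uu}_g(q;r)\subset g^{n_u}(U)$, both available simply because $g^{n_u}(U)$ is open. Then one iterates \emph{forward}: Lemma~\ref{lemma.centerconeestimates} (together with \cite[Lemmas 4.3, 5.7]{obataerg}, using that the anchor satisfies $g^n(m^+)\in G^u$ for all $n\ge 0$) produces, for every $n\ge n_1$, a subcurve $\gamma^+_n\subset g^n(\gamma^+)$ of length $>4\pi$ with $\pi_h(\gamma^+_n)$ tangent to $\C^{hor}_\theta$; and uniform expansion along $E^{uu}_g$ gives $g^n(W^{uu}_g(q;r))\supset W^{uu}_g(g^n(q);R)$ for $n\ge n_2$. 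Setting $n'_u=\max\{n_1,n_2\}$ yields all four bullets simultaneously. Two smaller remarks: your claim that a horizontal curve of length $>N^{-3/10}$ cannot lie in $Crit^u$ is false as stated (the bad strips have $x$-width $4N^{-3/10}$), which is why the paper tracks the anchor point rather than the whole curve and defers to \cite{obataerg}; and Lemma~\ref{lemma.involution} conjugates $f_N^{-1}$, not $g^{-1}$, so the stable half should be argued directly via Lemma~\ref{lemma.goodpoint}(2) and the vertical analogue of Lemma~\ref{lemma.centerconeestimates}.
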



\begin{proof}
Choose $N$ and $\mathcal{U}_N$ so that the conclusions of lemmas \ref{lemma.goodpoint} and \ref{lemma.intersectioncsu} hold. Fix $g\in \mathcal{U}_N$ and also fix two open sets $U,V\subset \T^4$. Take a small unstable curve $\gamma^u\subset U$ and consider $n_u\geq 0, m^u\in \gamma^u$  given in lemma \ref{lemma.goodpoint} (i.e.\@ $g^n(m^u)\in G^u\ \forall n\geq n_u$). Set $m^+ := g^{n_u}(m^u)$.

Since $m^+\in g^{n_u}(U)$ and the set $g^{n_u}(U)$ is open, we may take a curve 
\[\gamma^+ \subset \left(W^c(m^+) \cap g^{n_u}(U)\cap G^u \right)\]
 centered in $m^+$, such that $\pi_h(\gamma^+)$ is a horizontal segment on the torus $\T^2$. By lemma \ref{lemma.centerconeestimates}, the image $g(\gamma^+)$ projects to a curve tangent to $\C^{hor}_{\theta}$ and verifies $\mathit{length}(g(\gamma^+)) > N^{\frac{1}{2}} \mathit{length}(\gamma^+)$. The same argument as in the proof of lemmas $4.3$ and $5.7$ in \cite{obataerg} implies that there exists $n_1 \in \N$ such that for any $n\geq n_1$, there is a $C^1$ curve $\gamma^+_n \subset g^n(\gamma^+)$ with length greater than $4\pi$ and $\pi_h(\gamma^+_n)$ is tangent to $\C^{hor}_{\theta}$. 

Take $r>0$ so that 
\[
\displaystyle \bigcup_{q\in \gamma^+} W^{uu}_g(q;r) \subset g^{n_u}(U).
\]  
Fix $n_2\in \N$ such that for any $q\in \gamma^+$ and $n\geq n_2$, we have that $g^{n}(W^{uu}_g(q;r)) \supset W^{uu}_{g}(g^{n}(q;R))$. Finally, take $n_u' = \max\{n_1,n_2\}$. It follows directly that for any $n\geq n_u'$ we have
\[
\displaystyle \bigcup_{q\in \gamma^+_n} W^{uu}_{g}(q;R) \subset g^n(U), 
\]
which finishes the proof of the first part. A similar argument for $g^{-1}$ completes the proof of the proposition.
\end{proof}

Consider the vertical foliation $\F_{ver}=\{\{z\}\times\T^2:z\in\T^2\}$. Observe that if $N$ is sufficiently large and $g$ sufficiently $C^1$-close to $f_N$, we have that $W^c_g(m)$ intersects each vertical torus $\{z\}\times\T^2$ in exactly one point, for any $m\in \T^4$. Hence, for any two points $m_1, m_2\in \T^4$, the map from $W^c_g(m_1)$ to $W^c_g(m_2)$ defined by $h^g_{m_1,m_2}(p) = W^c_g(m_2) \cap \F_{ver}(p)$ is well defined. Note that $h^{f_N}_{m_1,m_2}$ is just the identity map, independently of the points $m_1, m_2$.


We recall also the notion of holonomy. For $p,q\in M$ with $q\in W^{ss}_{f_N}(p)$ define $H^{s,f_N}_{p,q}:W^c_{f_N}(p)\To W^c_{f_N}(q)$, the stable holonomy between $p$ and $q$, by
\[
H^{s,f_N}_{p,q}(w)=W^{ss}_{f_N}(w) \cap W^c_{f_N}(q),\textrm{ for $w\in W^c_{f_N}(p)$.}
\]
It is easy to see that this is a well defined map. Analogously we define $H^{s,g}_{p,q}$ for $g$ close to $f_N$. Similarly, we define the unstable holonomy map $H^{u,g}_{p,q}$ using $\Fu_g$ instead of $\Fs_g$.

Let $R>0$ be the constant given by lemma \ref{lemma.intersectioncsu}. 

\begin{lemma}
\label{lemma.c0holonomycontrol}
For every $\varepsilon>0$, there exists $N_0:= N_0(\varepsilon)$ with the following property: for $N\geq N_0$ there exists a $C^1$-neighborhood $\mathcal{U}_N$ of $f_N$ such that if $g\in \mathcal{U}_N, p\in \T^4$ and $q\in W^{ss}_{g}(p;R)$ then $d_{C^0}(h^g_{p,q}, H^{s,g}_{p,q})< \varepsilon$. Analogous result holds for the unstable holonomy. 
\end{lemma}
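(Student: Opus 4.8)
The plan is to exploit the fact that for the unperturbed map $f_N$ both objects being compared are essentially the identity: the map $h^{f_N}_{p,q}$ \emph{is} the identity (under the trivial vertical identification of center leaves), and the stable holonomy $H^{s,f_N}_{p,q}$ is also close to the identity because, by Lemma \ref{lemma.transversatilityunstable} and Lemma \ref{lemma.cones}, the strong stable leaves of $f_N$ are nearly parallel to the constant Anosov direction $E^s_A\subset\R^4$, and in particular their horizontal displacement (the $x$-coordinate of a unit stable vector) is of order $\lambda^N$. Concretely, first I would estimate, for a point $w\in W^c_{f_N}(p)$, the horizontal displacement between $w$ and $H^{s,f_N}_{p,q}(w)$: the stable segment of length at most $R$ joining $w$ to $W^c_{f_N}(q)$ has total horizontal excursion bounded by $R$ times the maximal slope of $E^{ss}_{f_N}$ over the center direction, which by Lemma \ref{lemma.cones} (taking $\alpha$ small) and Lemma \ref{lemma.transversatilityunstable} is $O(\lambda^N)$. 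Hence $d_{C^0}(H^{s,f_N}_{p,q},\mathrm{id})=d_{C^0}(H^{s,f_N}_{p,q},h^{f_N}_{p,q})\leq C\lambda^N R$, which is $<\varepsilon/2$ once $N\geq N_0(\varepsilon)$.

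Second, I would pass to the perturbation $g\in\mathcal{U}_N$ by a continuity argument. By Proposition \ref{coherence}, for $N$ large and $\mathcal{U}_N$ small, $g$ is dynamically coherent with center leaves that are $C^1$-close (uniformly) to the horizontal fibers; in particular $W^c_g(m)$ still meets each vertical torus $\{z\}\times\T^2$ in exactly one point, so $h^g_{p,q}$ is defined and is $C^0$-close to $h^{f_N}_{p,q}=\mathrm{id}$, with error controlled by the $C^1$-distance between $\Fc_g$ and $\Fc_{f_N}$, hence $<\varepsilon/4$ after shrinking $\mathcal{U}_N$. For the stable holonomy, I use the standard fact (\cite{hps}, chapter 5) that the strong stable foliation $\Fs_g$ varies continuously with $g$ in the $C^0$ (indeed $C^1$ on leaves) sense, and that the holonomy maps $H^{s,g}_{p,q}$ between the (continuously varying) center leaves depend continuously on $g$; combined with the leaf conjugacy of Proposition \ref{coherence}, this gives $d_{C^0}(H^{s,g}_{p,q},H^{s,f_N}_{p,q})<\varepsilon/4$ on $\mathcal{U}_N$. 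The triangle inequality
\[
d_{C^0}(h^g_{p,q},H^{s,g}_{p,q})\leq d_{C^0}(h^g_{p,q},h^{f_N}_{p,q})+d_{C^0}(h^{f_N}_{p,q},H^{s,f_N}_{p,q})+d_{C^0}(H^{s,f_N}_{p,q},H^{s,g}_{p,q})<\varepsilon
\]
then finishes the estimate. The statement for the unstable holonomy follows verbatim using Lemma \ref{lemma.involution} to exchange the roles of the $x$ and $y$ coordinates.

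The main obstacle I anticipate is making the continuity of $H^{s,g}_{p,q}$ in $g$ genuinely uniform in the base points $p,q$ and in $q\in W^{ss}_g(p;R)$: one must check that the constants coming from the $C^0$-continuity of $\Fs_g$ and of the center foliation do not degenerate as $p$ ranges over the compact torus and $q$ over the $R$-ball in the stable leaf. This is handled by compactness of $\T^4$ together with the uniform (in $m$) estimates in Lemmas \ref{lemma.cones} and \ref{lemma.transversatilityunstable} and the uniform leaf-conjugacy of Proposition \ref{coherence}, but it requires a little care to phrase the continuity statements with uniform moduli rather than pointwise. A secondary, purely bookkeeping point is to fix the order of quantifiers correctly: $N_0$ must be chosen first (large enough that $C\lambda^{N_0}R<\varepsilon/2$ and that Proposition \ref{coherence} and Lemmas \ref{lemma.cones}, \ref{lemma.transversatilityunstable} apply with the desired closeness), and only afterwards the neighborhood $\mathcal{U}_N$ shrunk depending on $\varepsilon$ and $N$.
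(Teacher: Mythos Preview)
Your proof is correct and follows essentially the same route as the paper's: both identify $h^{f_N}_{p,q}$ with the translation along the constant Anosov stable direction $\{0\}\times e^s$, use the cone estimate (Lemma \ref{lemma.cones}) to show that $W^{ss}_{f_N}(\cdot;R)$ stays $C^0$-close to that straight segment and hence $H^{s,f_N}_{p,q}$ is $C^0$-close to $h^{f_N}_{p,q}$, and then pass to nearby $g$ by $C^1$-continuity of the center and strong-stable foliations. One minor point: Lemma \ref{lemma.cones} as stated only yields that the slope of $E^{ss}_{f_N}$ over $E^c\oplus E^u_A$ is $\leq\alpha$ for any prescribed $\alpha>0$ (not the sharper $O(\lambda^N)$ you extract from Lemma \ref{lemma.transversatilityunstable}, which controls a single coordinate), but this weaker bound already suffices by choosing $\alpha\ll\varepsilon/R$, exactly as the paper does.
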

\begin{proof}
Fix $\varepsilon>0$. Let us first prove that the conclusion holds for $f_N$, for $N$ large. Using the coordinate system we defined in (\ref{eq.coordinatesystem}), we consider the constant vector field $X^s =  \{0\} \times e^s$, where $e^s$ is the unitary vector that generates the stable direction for the linear Anosov $A$ chosen at the beginning. Let $\{X^s_t\}_t$ be the flow  generated by $X^s$. As mentioned, since the system is a skew product, any stable manifold of $f_N$ projects to a stable manifold of $A$. In particular, for  $p\in \T^4$ and $q\in W^{ss}_{f_N}(p;R)$ there exists an unique number $T(q) \in \R$ such that $X^s_{T(q)} (.)$ is a diffeomorphism between $W^c_{f_N}(p)$ and $W^c_{f_N}(q)$. It is easy to see that  $h^{f_N}_{p,q}(m) = X^s_{T(q)}$ for $m\in W^c_{f_N}(p)$.

By lemma \ref{lemma.cones}, after fixing $\alpha \ll \frac{\varepsilon}{R}$, for $N$ large enough $E^{ss}_{f_N}$ belongs to the cone of size $\alpha$ around the direction $\{0\} \times E^s_A$. This implies that for any point $m\in \T^4$, the Hausdorff distance between the strong stable manifold $W^{ss}_{f_N}(m;R)$ and the piece of $X^s$-orbit $X^s_{[-R,R]}(m)$ is less than $\varepsilon$. By the definition of $H^{s,f_N}_{p,q}$, we conclude that $d_{C^0}(h^{f_N}_{p,q}, H^{s,f_N}_{p,q})< \varepsilon$.

Since the center leaves and compact parts of strong stable leaves vary $C^1$-continuously with the choice of a diffeomorphism $g$ in a neighborhood of $f_N$, we conclude that for any $g$ sufficiently $C^1$-close to $f_N$, $p\in \T^4$ and $q\in W^{ss}_{g}(p;R)$ we have $d_{C^0}(h^{g}_{p,q}, H^{s,g}_{p,q})< \varepsilon$.
\end{proof}

\noindent\textbf{Proof of the Main Theorem.} Fix $\varepsilon>0$ small and let $N$ be large enough with corresponding neighborhood $\mathcal{U}_N$  small enough such that the conclusions of proposition \ref{prop.goodcurves} and lemmas \ref{lemma.intersectioncsu} and \ref{lemma.c0holonomycontrol} hold. Fix $g\in \mathcal{U}_N$ and let $U,V\subset \T^4$ be any two open sets. 

\begin{figure}[h]
\centering
\includegraphics[scale= 0.6]{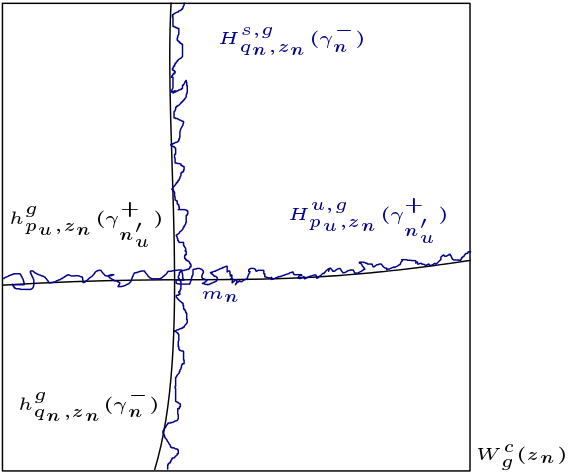}
\caption{Intersection between $H^{s,g}_{q_n,z_n}(\gamma^-_n)$ and $H^{u,g}_{p_u, z_n}(\gamma^+_{n'_u})$}
\label{figure1}
\end{figure}

By proposition \ref{prop.goodcurves} applied to $U$ for the future and $V$ for the past, we obtain two numbers $n'_u, n'_s \geq 0$ that verify the conclusion of the proposition. For $n\geq n'_s$ consider the curve $\gamma^-_n \subset W^c_{g}(q_n) \cap g^{-n}(V)$ that is almost vertical, and $\gamma^+_{n'_u}\subset W^c_g(p_u) \cap g^{n'_u}(U)$ be the almost horizontal curve given by the proposition.

Applying lemma \ref{lemma.intersectioncsu} we deduce the existence of a point $z_n\in\T^4$ such that $W^{cs}_{g}(q_n;R) \cap W^{cu}_{g}(p_u;R) = W^c_{g}(z_n)$. Observe that the image of $h^g_{q_n,z_n}(\gamma^-_n)$ is a curve $C^0$-close to a vertical curve of length $4\pi$. By lemma \ref{lemma.c0holonomycontrol}, the curve $H^{s,g}_{q_n,z_n}(\gamma^-_n)$ is also $C^0$-close to a vertical curve of length $4\pi$. Similarly, $H^{u,g}_{p_u, z_n}(\gamma^+_{n'_u})$ is a curve $C^0$-close to a horizontal curve of length $4\pi$. Therefore, the curves $H^{s,g}_{q_n,z_n}(\gamma^-_n)$ and $H^{u,g}_{p_u, z_n}(\gamma^+_{n'_u})$ must intersect at some point $m_n\in W^c_{g}(z_n)$ (see figure \ref{figure1}).

By proposition \ref{prop.goodcurves}, the point $m_n$ belongs to $g^{n'_u}(U) \cap g^{-n}(V)$. In particular, $g^{n'_u +n}(U) \cap V \neq \emptyset$. Hence, for any $n\geq n'_u + n'_s$ we have that $g^n(U) \cap V \neq \emptyset$ and $g$ is topologically mixing. This concludes the proof of the Main Theorem.


\information

\end{document}